\theoremstyle{plain}
\newtheorem{theorem}{Theorem}[section]
\newtheorem{definition}[theorem]{Definition}
\newtheorem{lemma}[theorem]{Lemma}
\newtheorem{proposition}[theorem]{Proposition}
\newtheorem*{openproblem}{Problem}
\newcommand{\CSP}{\operatorname{CSP}}
\begin{document}

\title[New algorithm for few subpowers templates]{A new algorithm for constraint satisfaction problems with few subpowers templates}

\author{Dejan Deli\'c}
\address{Department of Mathematics, Ryerson University,  Canada}
\email{ddelic@ryerson.ca}

\author{Amir El-Aooiti}
\address{Department of Mathematics, Ryerson University, Canada}
\email{amir.elaooiti@ryerson.ca}

\thanks{The first author gratefully acknowledges support by the Natural Sciences and
Engineering Research Council of Canada in the form of a Discovery Grant.}

\begin{abstract} 
In this article, we provide a new algorithm for solving constraint satisfaction problems over templates with few subpowers, by reducing the problem to the combination of solvability of a polynomial number of systems of linear equations over finite fields and reductions via absorbing subuniverses.
\end{abstract}

\maketitle

\section*{Introduction}
\noindent A variety of problems in the fields of combinatorics, artificial intelligence,and programming can be stated within the framework of the Constraint Satisfaction Problems (or, CSPs, for short.)  Presented with an instance of the CSP, the question is to determine whether there is an assignment of values from the prescribed domains to the variables of the instance in such a way, that all the constraints are satisfied. In its full generality, the problem is in the complexity class \textsc{NP}. However, by restricting the attention to a particular subclass of problems whose domains (templates) allow for a particular algebraic characterization, the CSP restricted to the subclass in question may be tractable (i.e. in \textsc{P}.) In this paper, we adopt the following point of view of studying the complexity of CSPs: we study the restrictions of  the instances by allowing a fixed set of constraint relations. This approach is generally referred to in the literature as a \emph{constraint language} or, a \emph{fixed template}  (\cite{b-j-k}). This point of view allows us the access to a variety of algebraic results, which have proved to be indispensable tools in the study of the CSP in the past 15 years or so. 

The algebraic approach has subsequently yielded a number of important results. Among the most important ones, Barto and
Kozik~\cite{b-k2} gave a complete algebraic description of the
constraint languages over finite domains that are solvable by local
consistency methods (these problems are said to be of \emph{bounded
  width}) and as a consequence it is decidable to determine whether a
constraint language can be solved by such methods. 

The motivation underlying the algebraic approach to the study of parametrized CSPs can be stated in the following way: given a constraint language $\Gamma$, what classes of operations preserving the relations in $\Gamma$ guarantee the existence of a ``nice" algorithm solving the problem? 

Another important result in this direction was the one of  A. Bulatov and V. Dalmau (\cite{bulatov2006simple}) proving the existence of such an algorithm for a fairly general and natural class of parametrized CSPs, those whose parametrizing algebra is \emph{Maltsev}, i.e. in which the constraint language is invariant under an algebraic operation satisfying the condition $m(x,x,y)\approx m(y,x,x)\approx y$, for all elements $x$ and $y$ of the algebra. Their algorithm, also known as the Generalized Gaussian Elimination, provided a common generalization for already known algorithms for solving CSPs over affine domains, CSPs on finite groups with near subgroups, etc. The result of Bulatov and Dalmau is based on the algebraic fact that, given any Maltsev algebra $\mathbb{A}$, any subpower of $\mathbb{A}^n$ has a generating set of polynomial (in fact, linear) size in $n$. This approach was further generalized in \cite{BIMMVW} to show that a modification of Bulatov-Dalmau algorithm solves all CSPs over the so called domains \emph{with few subpowers}, i.e. for all the domains $\mathbb{A}$ with the property that any subpower of $\mathbb{A}^n$ has a generating set of polynomial size in $n$. Subsequently, this result was generalized by J. Berman et al. \cite{BIMMVW}, who showed that the Bulatov-Dalmau algorithm can be generalized to a wider class of templates, namely the templates with a $k$-edge polymorphism, which will be defined later. In fact, the class of templates with $k$-edge polymorphisms is precisely the class of templates with few subpowers, i.e. the class of templates to which the mechanism of the Bulatov-Dalmau algorithm can be generalized.

The algorithms presented in \cite{bulatov2006simple} and \cite{BIMMVW} require explicit knowledge of the algebraic operations witnessing the few subpowers property. This, in itself, may be viewed as problematic, since, in practical applications, the CSP is generally presented in the form of its constraint language and computing the required term is a highly nontrivial problem in terms of its complexity. Secondly, those algorithms do not provide ``short" proofs of unsatisfiability in the same way local consistency checks do. Finally, the Generalized Gaussian Elimination algorithm does not make use of the structure theory ofcongruence modular varieties. The algorithm we present in this article is motivated by the desire to address some of these issues.

\section{Preliminaries}

\subsection{Constraint Satisfaction Problem}

\begin{definition} An \emph{instance} of the CSP is a triple $\mathcal{I}=(V,A,\mathcal{C})$, where $V=\{x_1,\ldots,x_n\}$ is a finite set of \emph{variables}, $A$ is a finite domain for the variables in $V$, and $\mathcal{C}$ is a finite set of \emph{constraints} of the form $C=(S,R_S)$, where $S$, the \emph{scope} of the constraint, is a $k$-tuple of variables $(x_{i_1},\ldots,x_{i_k})\in V^k$ and $R_S$ is a $k$-ary relation $R_S\subseteq A^k$, called the \emph{constraint relation} of $C$.

A \emph{solution} for the instance $\mathcal{I}$ is any assignment $f:V\rightarrow A$, such that, for every constraint $C=(S,R_S)$ in $\mathcal{C}$, $f(S)\in R_S$.
\end{definition}

A relational structure $\mathbf{A}=(A, \Gamma)$, defined over the domain $A$ of the instance $\mathcal{I}$, where $\Gamma$ is a finite set of relations on $A$, is often referred to as a \emph{constraint language}, and the relations from $\Gamma$ form the signature of $\mathbf{A}$. An instance of $\CSP (\mathbf{A})$ will be an instance of the CSP such that all constraint relations belong to $\mathbf{A}$. 



\subsection{Basic Algebraic Concepts}\label{sec:Algebra}
\noindent In this subsection, we introduce concepts from universal algebra which will be used in the remainder of the paper. Two good refrences for a more in-depth overview of universal algebra are \cite{Burris1981} and \cite{bergman}.

An \emph{algebra} is an ordered pair $\mathbb{A}=(A, F)$, where $A$ is a nonempty set, the \emph{universe} of $\mathbb{A}$, while $F$ is the set of \emph{basic operations} of $\mathbb{A}$, consisting of functions of arbitrary, but finite, arities on $A$. The list of function symbols and their arities is the \emph{signature} of $\mathbb{A}$. 

A \emph{subuniverse} of the algebra $\mathbb{A}$ is a nonempty subset $B\subseteq A$ closed under all operations of $\mathbb{A}$. If $B$ is a subuniverse of $\mathbb{A}$, by restricting all operations of $\mathbb{A}$ to $B$, such a subuniverse is a \emph{subalgebra} of $\mathbb{A}$, which we denote $\mathbb{B}\leq \mathbb{A}$.

If $\mathbb{A}_i$ is an indexed family of algebras of the same signature, the product $\prod_i \mathbb{A}_i$ of the family is the algebra whose universe is the Cartesian products of their universes $\prod_i A_i$ endowed with the basic operations which are coordinatewise products of the corresponding operations in $\mathbb{A}_i$. If $\mathbb{A}$ is an algebra, its $n$-th Cartesian power will be denoted $\mathbb{A}^n$.

An equivalence relation $\alpha$ on the universe $A$ of an algebra $\mathbb{A}$ is a \emph{congruence} of $\mathbb{A}$, if $\alpha \leq \mathbb{A}^2$, i.e. if $\alpha$ is preserved by all basic operations of $\mathbb{A}$. In that case, one can define the algebra $\mathbb{A}/\alpha$, the \emph{quotient of} $\mathbb{A}$ \emph{by} $ \alpha$, with the universe consisting of all equivalence classes (cosets) in $A/\alpha$ and whose basic operations are induced by the basic operations of $\mathbb{A}$. The $\alpha$-congruence class containing $a\in A$ will be denoted $a/\alpha$.

An algebra $\mathbb{A}$ is said to be \emph{simple} if its only congruences are the trivial, diagonal relation $0_\mathbb{A}=\{(a,a)\, \vert \, a\in A\}$ and the full relation $1_\mathbb{A}=\{ (a,b)\, \vert\, a,b\in A\}$. 

Any subalgebra of a Cartesian product of algebras $\mathbb{A}\leq \prod_i \mathbb{A}_{i\in I}$ is equipped with a family of congruences arising from projections on the product coordinates. We denote $\pi_i$ the congruence obtained by identifying the tuples in $A$ which have the same value in the $i$-th coordinate. Given any $J\subseteq I$, we can define a subalgebra of $\mathbb{A}$, $proj_J(\mathbb{A})$, which consists of the projections of all tuples in $A$ to the coordinates from $J$. If $\mathbb{A}\leq \prod_{i\in I} \mathbb{A}_i$ is such that $proj_i (\mathbb{A})=\mathbb{A}_i$, for every $i\in I$, we say that $\mathbb{A}$ is a \emph{subdirect product} and denote this fact $\mathbb{A}\leq_{sp} \prod_{i\in I} \mathbb{A}_i$.

If $\mathbb{A}$ and $\mathbb{B}$ are two algebras of the same signature, a mapping from $A$ to $B$ which preserves all basic operations is a \emph{homomorphism}. An \emph{isomorphism} is a bijective homomorphism between two algebras of the same signature.

Given an algebra $\mathbb{A}$, a \emph{term} is a syntactical object describing a composition of basic operations of $\mathbb{A}$. A \emph{term operation} $t^\mathbb{A}$ of $\mathbb{A}$ is the interpretation of the syntactical term $t(x_1,\ldots,x_m)$ as an $m$-ary operation on $A$, according to the formation tree of $t$.

A \emph{variety} is a class of algebras of the same signature, which is closed under the class operators of taking products, subalgebras, and homomorphic images (or, equivalently, under the formation of quotients by congruence relations.) We denote these class operators $\operatorname{H}$, $\operatorname{S}$, and $\operatorname{P}$, respectively. The variety $\mathcal{V}(\mathbb{A})$ generated by the algebra $\mathbb{A}$ is the smallest variety containing $\mathbb{A}$ and, 
$$\mathcal{V}(\mathbb{A})=\operatorname{H}\operatorname{S}\operatorname{P}(\mathbb{A}).$$

 Birkhoff's theorem (see \cite{Burris1981}) states that every variety is an equational class; that is, every variety $\mathcal{V}$ is uniquely determined by a set of identities (equalities of terms) $s\approx t$ so that $\mathbb{A}\in\mathcal{V}$ if and only if $\mathbb{A}\models s\approx t$, for every identity $s\approx t$ in the set.

An \emph{$n$-ary operation} on a set $A$ is a mapping
$f:A^n\rightarrow A$; the number $n$ is the \emph{arity} of $f$.  Let
$f$ be an $n$-ary operation on $A$ and let $k>0$. We write $f^{(k)}$
to denote the $n$-ary operation obtained by applying $f$ coordinatewise on
$A^k$. That is, we define the $n$-ary operation $f^{(k)}$ on $A^k$ by
\[
f^{(k)}(\mathbf a^1,\dots,\mathbf
a^n)=(f(a^1_1,\dots,a^n_1),\dots,f(a^1_k,\dots,a^n_k)),
\]
for $\mathbf a^1,\dots, \mathbf a^n\in A^k$.

The notion of \emph{polymorphism} plays the central role in the 
algebraic approach to the $\CSP$. 

\begin{definition}
  Given an $\Gamma$-structure $\mathbf{A}$, an $n$-ary
  \emph{polymorphism} of $\mathbf{A}$ is an $n$-ary operation $f$ on
  $A$ such that $f$ preserves the relations of $\mathbf A$. That is,
  if $\mathbf{a}^1,\dots,\mathbf{a}^n\in R$, for some $k$-ary relation
  $R$ in $\Gamma$, then $f^{(k)}(\mathbf a^1,\dots,\mathbf
  a^n)\in R$.  
\end{definition}

If a relational structure $\mathbf{A}$ is a core, one can construct a structure $\mathbf{A}'$ from $\mathbf{A}$ by adding, for each element $a\in A$, a unary constraint relation $\{a\}$. This enables us to further restrict the algebra of polymorphisms associated with the template; namely, if $f(x_1,\ldots,x_m)$ is an $m$-ary polymorphism of $\mathbf{A}'$, it is easy to see that
$f(a,a,\ldots,a)=a,$
for all $a\in A$. In addition to this, the constraint satisfaction problems with the templates $\mathbf{A}$ and $\mathbf{A}'$ are logspace equivalent. Therefore, we may assume that the algebra of polymorphisms associated to any CSP under consideration is \emph{idempotent}; i.e. all its basic operations $f$ satisfy the identity
$$f(x,x,\ldots,x)\approx x.$$

\subsection{Absorption}\label{absorption}

One of the key notions which has emerged in recent years as an important tool in the algebraic approach to the study of CSPs with finite templates is the one of absorption. It has played a crucial role in the proof of the Bounded Width Conjecture and its refinements (see \cite{b-k1} , \cite{b-k2},  \cite{Kozik2016}) but its primary strength is in its applicability outside the context of congruence meet-semidistributivity.

If $\mathbb{A}$ and $\mathbb{B}$ are idempotent algebras such that $\mathbb{B}\leq \mathbb{A}$, we say that $\mathbb{B}$ \emph{absorbs} $\mathbb{A}$ and write it as $\mathbb{B}\unlhd \mathbb{A}$ if there exists a term $t$
such that
$$t(B,B,\ldots, B,A,B,\ldots, B)\subseteq B,$$
regardless of the placement of $A$ in the list of variables of the term.

A direct consequence of the definition is the following fact: if $\mathbb{A},\mathbb{B},\mathbb{A}'$ and $\mathbb{B}'$ are algebras of the same signature such that $\mathbb{B}\unlhd\mathbb{A}$ and $\mathbb{B}'\unlhd\mathbb{A}'$, then both absorptions can be witnessed by the same term.

\begin{proposition}\label{absorb} Let $\mathbb{R}\leq_{sp} \mathbb{A}\times \mathbb{B}$.
\begin{enumerate}
\item The binary relation $\alpha$ defined on $A$ by
$$(a,a')\in\alpha  \mbox{ if and only if  there exists $b\in B$ such that } (a,b),(a,b')\in C$$
is a congruence of $\mathbb{A}$. The analogous statement is true of the dual relation $\beta$ defined on $B$.
\item If $\mathbb{C}'\unlhd \mathbb{C}\leq_{sp} \mathbb{A}\times\mathbb{B}$ and $\mathbb{C}'\leq_{sp}\mathbb{A}\times\mathbb{B}$, if $\alpha'$ and $\beta'$ are the pair of congruences defined on $A'$ and $B'$, respectively, as in (1), then $\alpha=\alpha'$ and $\beta=\beta'$.
\end{enumerate}
\end{proposition}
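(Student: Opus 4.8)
The plan is to prove part~(1) directly and then leverage it, together with the basic property of absorption terms, to establish part~(2). For part~(1), I would first check that $\alpha$ is an equivalence relation. Reflexivity is immediate since $\mathbb{R}\leq_{sp}\mathbb{A}\times\mathbb{B}$ means every $a\in A$ occurs in some pair $(a,b)\in R$, so $(a,a)\in\alpha$; symmetry is obvious from the symmetric roles of $b,b'$ in the defining condition. Transitivity is the one genuinely substantive point: given $(a,a')\in\alpha$ witnessed by $b$ with $(a,b),(a',b)\in R$, and $(a',a'')\in\alpha$ witnessed by $b'$ with $(a',b'),(a'',b')\in R$, I need a single second coordinate linking $a$ and $a''$. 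This is where idempotency enters --- I have not yet used that $\mathbb{A},\mathbb{B}$ are idempotent, but in fact $\alpha$ as defined is exactly the composition $\pi_B \circ \pi_B$ where $\pi_B$ is the kernel of the projection of $R$ onto $B$; transitivity of $\alpha$ is equivalent to saying this relation composed with itself is itself, which one gets by a standard argument once one observes that $\alpha$ restricted appropriately is the ``link'' congruence. (Alternatively: $\alpha$ is the smallest congruence such that $R/\alpha$ becomes the graph of a homomorphism; I would phrase transitivity via closure.) Finally, to see $\alpha$ is a congruence I would verify it is a subalgebra of $\mathbb{A}^2$: given $(a_1,a_1'),\dots,(a_n,a_n')\in\alpha$ with witnesses $b_1,\dots,b_n\in B$, apply any basic operation $f$ coordinatewise to the tuples $(a_i,f(\ldots))$ --- more precisely, $f$ applied to the pairs $(a_i,b_i)$ and to $(a_i',b_i)$ lands in $R$ and shares the second coordinate $f(b_1,\dots,b_n)$, so $(f(\mathbf a),f(\mathbf a'))\in\alpha$. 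The dual statement for $\beta$ is proved symmetrically.

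For part~(2), fix a term $t$ witnessing $\mathbb{C}'\unlhd\mathbb{C}$. The inclusion $\alpha'\subseteq\alpha$ is immediate from $C'\subseteq C$: any witness for a pair in $\alpha'$ is a witness for it in $\alpha$. For the reverse inclusion $\alpha\subseteq\alpha'$, take $(a,a')\in\alpha$ with $a,a'\in A'$ (note $A'=\operatorname{proj}_A(\mathbb{C}')$ since $\mathbb{C}'\leq_{sp}\mathbb{A}\times\mathbb{B}$, so in particular $A'=A$ and likewise $B'=B$), witnessed by some $b\in B$ with $(a,b),(a',b)\in C$. Since $A'=A$ and $B'=B$, there exist $(a,b_1),(a',b_2)\in C'$. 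Now I would feed into the absorbing term $t$ a mixture of these tuples: apply $t$ coordinatewise to an $n$-tuple of elements of $C$ in which all but one slot is the element $(a,b_1)\in C'$ (or $(a',b_2)\in C'$) and the exceptional slot is $(a,b)$ (resp.\ $(a',b)$). By the absorption property the result lies in $C'$, and by idempotency its $A$-coordinate is $a$ (resp.\ $a'$). Doing this carefully in two stages --- first absorbing on the pair containing $a$, then on the pair containing $a'$ --- produces tuples $(a,b^*),(a',b^*)\in C'$ with a \emph{common} second coordinate $b^*$, which is exactly a witness that $(a,a')\in\alpha'$. The dual argument gives $\beta=\beta'$.

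The main obstacle I anticipate is the ``common second coordinate'' step in part~(2): a single application of the absorbing term on one tuple changes only that tuple's image, so I must arrange the two resulting $C'$-tuples to agree in the $B$-coordinate. The clean way is to first use absorption to replace $(a,b)$ by some $(a,b')\in C'$, then --- since now I have $(a,b'),(a',b)\in C$ with $a,a'\in A'$ --- to realize that it suffices to find \emph{any} common witness in $C'$; here I would exploit that $\alpha$ is already a congruence (part~(1)) together with the fact that $C'$ and $C$ induce, via part~(1) applied to $\mathbb{C}'$ and to $\mathbb{C}$, congruences that I am trying to prove equal, so I should instead argue symmetrically: build a $C'$-tuple over $a$ and a $C'$-tuple over $a'$ and then apply $t$ simultaneously to both in a way that synchronizes their $B$-coordinates. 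Making this synchronization argument precise --- essentially that one can run the absorption ``in parallel'' on the two pairs using the \emph{same} term, which is the remark stated just before the proposition --- is the crux, and I would allocate most of the write-up to it.
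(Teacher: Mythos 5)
The paper itself states Proposition~\ref{absorb} without proof (it is imported from standard absorption theory), so your proposal must stand on its own, and it has two genuine gaps. The first is transitivity in part~(1). Your justification is circular: you get transitivity ``once one observes that $\alpha$\ldots is the link congruence,'' which is exactly what is being proved, and the claim that $\alpha$ is ``$\pi_B\circ\pi_B$'' does not help, because $\alpha$ is the \emph{image} in $A\times A$ of the kernel of the projection of $R$ onto $B$, and images of transitive relations under projections need not be transitive. For a general idempotent algebra the one-step relation ``$a$ and $a'$ have a common $R$-neighbour'' is simply not transitive (it is for Maltsev algebras, but no such hypothesis is in force here); the right object is its transitive closure, the linkedness congruence, for which reflexivity, symmetry and your compatibility argument (apply a basic operation to the witnessing pairs) go through verbatim. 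Also, idempotency is not what yields transitivity --- you invoke it there but never use it; where it is really needed is in part~(2), to keep first coordinates fixed.

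The second gap is the crux of part~(2), which you correctly identify but do not close, and which cannot be closed in the form you aim for. Keeping the first coordinates equal to $a$ and $a'$ by idempotency forces the filler tuples to be $(a,b_1)$ and $(a',b_2)$ respectively, so the two outputs of $t$ have second coordinates of the form $t(b,b_1,\dots,b_1)$ and $t(b,b_2,\dots,b_2)$, and nothing equates them; the remark that two absorptions can be witnessed by a common term concerns different pairs of algebras and does not synchronize these coordinates. Indeed you should not expect a \emph{one-step} common witness in $C'$ at all: what is true, and what suffices, is that $a$ and $a'$ become $C'$-\emph{linked}, i.e. $\alpha\subseteq\alpha'$ where $\alpha'$ is the linkedness congruence of $C'$. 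A clean way to get this: pass to the quotient of $A\times B$ by $\alpha'\times\beta'$; there $C'$ becomes the graph of an isomorphism, absorption is preserved by this quotient (same term $t$), and a subdirect relation absorbed by the graph of an isomorphism must equal that graph --- identify the graph with the diagonal, note that an off-diagonal pair $(a,b)$ in the bigger relation yields the identities $t(x_1,\dots,a,\dots,x_n)=t(x_1,\dots,b,\dots,x_n)$ for every position, and swap $a$ for $b$ one coordinate at a time to conclude $a=b$ by idempotency, a contradiction. This gives $\alpha\subseteq\alpha'$ (dually $\beta\subseteq\beta'$), and together with your easy inclusion $\alpha'\subseteq\alpha$ it yields the stated equality; as written, your sketch stops exactly at the step that carries all the content.
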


We will refer to the congruences $\alpha$ and $\beta$, defined as in Part (1) of the Proposition \ref{absorb} , as the \emph{linkedness} congruences on $\mathbb{A}$ and $\mathbb{B}$ induced by $\mathbb{C}$. We say that $\mathbb{A}$ and $\mathbb{B}$ are \emph{linked} if $\alpha=1_\mathbb{A}$ and $\beta=1_\mathbb{B}$ or, equivalently, if $\pi_1\vee\pi_2=1_\mathbb{C}$. If $\alpha=0_\mathbb{A}$ and $\beta=0_\mathbb{B}$, the subdirect product is the graph of an isomorphism between the algebras $\mathbb{A}$ and $\mathbb{B}$.

For Taylor algebras, linked subdirect products satisfy the following property:

\begin{theorem} (L. Barto, M. Kozik, \cite{b-k2}) \label{AbsThm} Let $\mathbb{C}\leq_{sp} \mathbb{A}\times\mathbb{B}$ be a Taylor algebra. If $\mathbb{C}$ is linked then
\begin{itemize}
\item $\mathbb{C}=\mathbb{A}\times\mathbb{B}$, or
\item $\mathbb{A}$ has a proper absorbing subalgebra, or
\item $\mathbb{B}$ has a proper absorbing subalgebra.
\end{itemize}
\end{theorem}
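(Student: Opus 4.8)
The plan is to prove the contrapositive: assume $\mathbb{C}\leq_{sp}\mathbb{A}\times\mathbb{B}$ is linked but $\mathbb{C}\neq\mathbb{A}\times\mathbb{B}$, and produce a proper absorbing subalgebra of $\mathbb{A}$ or of $\mathbb{B}$. I would fix a counterexample minimizing $|A|+|B|$, and among those minimizing $|C|$, and then either contradict minimality or exhibit the absorbing subalgebra outright.

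The soft part is a sequence of reductions. If $|A|=1$ or $|B|=1$ then subdirectness already forces $\mathbb{C}=\mathbb{A}\times\mathbb{B}$, so both factors have at least two elements. Two inheritance facts drive the reductions: linkedness is preserved under homomorphic images (project the connecting zig-zag walks through $\mathbb{C}$), and a term witnessing absorption in a quotient witnesses absorption of the full preimage, so a quotient cannot manufacture absorption out of nothing. Combined with Proposition~\ref{absorb} and the behaviour of the linkedness congruences, these let one quotient out a congruence on either factor whenever convenient, reducing to the case in which $\mathbb{A}$ and $\mathbb{B}$ are simple. This bookkeeping is routine.

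The substance is the remaining minimal configuration, and it is the only place the Taylor hypothesis is truly used. The tool is the combinatorial term that a finite idempotent Taylor algebra is known to admit: a cyclic operation $c(x_1,\dots,x_p)$, of a prime arity $p$ exceeding $|A|$ and $|B|$, satisfying $c(x_1,x_2,\dots,x_p)\approx c(x_2,\dots,x_p,x_1)$. Linkedness says exactly that any two elements of $A$ are joined by a zig-zag through $\mathbb{C}$, namely a walk $a=a_0,b_0,a_1,b_1,\dots,a_m=a'$ with each of $(a_i,b_i)$ and $(a_{i+1},b_i)$ lying in $C$. The idea is to feed such walks through $c$ --- after padding them, by idempotency, to a length divisible by $p$ --- so that cyclicity symmetrizes away the asymmetry between a walk's two endpoints. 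Tracking which pairs in $A\times B$ become reachable this way should yield a dichotomy: either every pair is reached, so $\mathbb{C}=\mathbb{A}\times\mathbb{B}$, contradicting the choice of counterexample; or the set of elements reachable within a bounded number of zig-zag steps from a fixed base point is a proper subset of $A$ (or of $B$) which, by the way it is built from the cyclic term, is closed whenever at most one argument of a term strays outside it, that is, a proper absorbing subalgebra. Making this dichotomy actually trigger --- identifying the right potential function on walks and showing that the bounded-reach set is both proper and genuinely absorbing rather than merely a subuniverse --- is the crux, and the step I expect to be the main obstacle; everything before it is comparatively formal. The case $\mathbb{A}=\mathbb{B}$ with $\mathbb{C}$ a compatible reflexive symmetric connected relation is the loop lemma for smooth digraphs, where exactly this cyclic-term manipulation is already unavoidable.
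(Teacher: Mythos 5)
The paper does not actually prove Theorem~\ref{AbsThm}; it is quoted from Barto and Kozik, so your attempt has to be measured against their proof in the literature, which is a long and delicate argument. Your proposal does not get there: the step you yourself flag as ``the crux'' --- showing that the set of elements reachable by boundedly many zig-zag steps, symmetrized through the cyclic term, is either all of $A$ (forcing $\mathbb{C}=\mathbb{A}\times\mathbb{B}$) or a \emph{proper absorbing} subuniverse of a factor --- is the entire content of the theorem, and no argument is offered for it. Idempotence plus cyclicity does not by itself make a bounded-reach set closed under terms in which one argument strays outside it: absorption demands a single term witnessing the containment in every coordinate position, and nothing in the sketch constructs such a term. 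There is also a circularity hazard: in the Barto--Kozik development the existence of cyclic terms of prime arity $p>|A|$ is itself derived from the Absorption Theorem (via the loop lemma for smooth digraphs, which you cite as the model case), so leaning on the cyclic term theorem as your main tool requires appealing to an independent proof of it, which you do not indicate.

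The ``routine bookkeeping'' reduction to simple factors is also flawed as stated. Quotienting $\mathbb{A}$ by a congruence $\alpha$ does preserve linkedness of the image of $\mathbb{C}$ in $\mathbb{A}/\alpha\times\mathbb{B}$, and absorption does lift from a quotient to the preimage; so if the image is not the full product, minimality finishes the argument. But if the image \emph{is} the full product $\mathbb{A}/\alpha\times\mathbb{B}$, you learn nothing, because the hypothesis $\mathbb{C}\neq\mathbb{A}\times\mathbb{B}$ is not inherited by quotients; hence you cannot conclude that a minimal counterexample has simple factors. The genuinely hard configuration --- the quotient linked and full while $\mathbb{C}$ itself is not full --- is exactly where the published proof does its heavy term-construction work. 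As it stands, your proposal is a plausible plan with the two decisive steps missing, not a proof.
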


\section{Datalog, Linear Arc Consistency, and Singleton Linear Arc Consistency}

A \emph{Datalog program} for a relational template $\mathbf{A}$ is a finite set of rules of the form
$$T_0\leftarrow T_1,T_2,\ldots, T_n$$
where $T_i$'s are atomic formulas. 
$T_0$  is the \emph{head} of the rule, while $T_1,T_2,\ldots, T_n$ form the \emph{body} of the rule.
Each Datalog program consists of two kinds of relational predicates:
the \emph{intentional} ones (IDBs), which are those occurring at least once in the head of some rule and which are not part of the original signature of the template (they are derived by the computation.)
The remaining predicates are said to be the \emph{extensional} ones, or EDBs. They are relations from the signature of the template and do not change during computation; i.e. they cannot appear in the head of any rule.
In addition to those, there is one special, designated IDB, which is nullary (Boolean) and referred to as the \emph{goal} of the program.

We say that the rule
$$T_0\leftarrow T_1,T_2,\ldots, T_n$$
is \emph{linear} if at most one atomic formula in its body is an IDB. A Datalog program is linear if so are all its rules. 

The semantics of Datalog programs are generally defined
in terms of fixed-point operators. We are particularly interested in the Datalog programs which, being presented a relational template $\mathbf{A}$, verify if the template satisfies certain consistency requirements in terms of witnessing path patterns prescribed by the CSP instance in question. 

\subsection{Linear Arc Consistency}

Given a CSP instance $\mathcal{I}$ over a relational template $\mathbf{A}$, a Datalog program verifying its linear arc consistency has one IDB $B(x)$, for each subset $B\subseteq A$ in the instance. To construct rules for the program, we consider a single constraint $R(x_{i_1},x_{i_2},\ldots, x_{i_m})$, with $R$ being a $k$-ary relation in the signature of $\mathbf{A}$, and two variables $x_{i_j},x_{i_k}$ in its scope. If a fact $B(x_{i_j})$ has already been established about $x_{i_j}$, we add the rule
$$C(x_{i_k}) \leftarrow R(x_{i_1},x_{i_2},\ldots, x_{i_m}), B(x_{i_j}).$$
The collection of all such rules, along with the goal, is said to be a Datalog program verifying the \emph{linear arc consistency} of the instance. If the goal predicate is derived, the instance is not linearly arc consistent; otherwise, we say that it  has linear arc consistency, or LAC, for short.

The complexity of verifying LAC for an instance is in nondeterministic log-space, since it reduces to verifying reachibility in a directed graph.

\subsection{Singleton Linear Arc Consistency}\label{SLAC}

Singleton linear arc consistency (or, SLAC, for short) is a consistency notion provably stronger than linear arc consistency. A recent result of M. Kozik (\cite{Kozik2016}) proves that, in fact, all CSPs over the templates of bounded width can be solved by SLAC, whereas, under the assumption that \textsc{NL}$\neq$ \textsc{P}, there are CSPs over the bounded width templates which cannot be solved by LAC, for instance \textsc{3-HORN-SAT}, the satisfiability of Horn formulas in the 3-CNF.

We describe the algorithm for verifying SLAC in its procedural form. Given an instance $\mathcal{I}$, we introduce a unary constraint $B_x$, for each variable in the instance and update them by running the LAC algorithm with the value of $x$ being fixed to an arbitrary $a\in B_x$.

\begin{algorithm}[H]
   \caption{SLAC Algorithm}
    \begin{algorithmic}[1]
     
        \For{ every variable $x$ of $\mathcal{I}$}
            \State Introduce the unary constraint $B_x :=(x,A)$
        \EndFor
        \Repeat
                \For{ every variable $x$ of $\mathcal{I}$}  
                    \State $C:=A$
                    \For{ every value $a\in A$}
                          \State run LAC on the restriction of $\mathcal{I}$ with $B_x=\{a\}$ and constraints modified accordingly
                          \If { LAC results in contradiction}
                                \State remove $a$ from $C$
                         \EndIf
                    \EndFor
                    \State $B_x:=(x,C)$
                 \EndFor
        \Until{ There are no further changes in $B_x$}
\end{algorithmic}
\end{algorithm}

In this paper, we will be using the multisorted version of SLAC. What we mean by that, is that the predicates for the domains of different variables $x$ are assumed to be the subsets of different sorted domains, generated by the reduction to a binary instance. Since the domains produced by the reduction to the binary case are positive-primitive definable, this presents no particular issue.

\section{Patterns and steps}

We will create SLAC instances of structures with binary constraints, and, to that end, we define the notions of a pattern and a step. Our definitions will be special cases of the more general ones given in \cite{Kozik2016}. We fix an instance $\mathcal{I}$ of a CSP, all of whose constraint relations are binary.

\begin{definition} A \emph{step}  in an instance $\mathcal{I}$ is a pair of variables which is the scope of a constraint in $\mathcal{I}$. A \emph{path-pattern} from $x$ to $y$ in $\mathcal{I}$ is a sequence of steps such that every two steps correspond to distinct binary constraints and which identifies each step's end variable with the next step's start variable. A \emph{subpattern} of a path-pattern is a path-pattern defined by a substring of the sequence of steps. We say that a path-pattern is a \emph{cycle} based  at $x$ if both its start and end variable are $x$.
\end{definition}

\begin{definition} Let
$$p=(x_1,x_2,\ldots,x_k)$$
be a path-pattern. A \emph{realization} of $p$ is a $k$-tuple $(a_1,\ldots,a_k)\in \mathbb{S}_{x_1}\times\ldots\times \mathbb{S}_{x_k}$ such that $(a_i,a_j)$ satisfies the binary constraint associated with the $(x_i,x_j)$-step. If $p$ is a path-pattern with the start variable $x_i$ and $A\subseteq S_{x_i}$, we denote $A+p$ the set of the end elements of all realizations of $p$ whose first element is in $A$. $-p$ will denote the inverse pattern of $p$, i.e. the pattern obtained by reversing the traversal of the pattern $p$. In that case, we define $A-p = A+(-p)$.
\end{definition}

The following observations follow directly from the definitions of the notions of Linear Arc Consistency and Singleton Linear Arc Consistency:

\begin{enumerate} 
\item The LAC algorithm does not derive a contradiction on the instance $\mathcal{I}$ if and only if every path-pattern in $\mathcal{I}$ has a solution.
\item If an instance $\mathcal{I}$ is a SLAC instance then, for every variable $x$ and every $a\in \mathbb{S}_x$, and every path pattern $p$ which is a cycle based at $x$, there exists a realization of $p$ with $x$ being assigned the value  $a$. 
\end{enumerate}

\section{Algebras with few subpowers}\label{skewfree}

In this section, we  give a brief overview of the main properties of algebras with the so-called $k$-edge terms and the algebraic properties implied by their existence. In particular, we highlight the relationship between the existence of $k$-edge terms and the property of having few subpowers. For more detials, the reader is referred to \cite{BIMMVW}.

\begin{definition}
For $k \geq 2$, a \textit{$k$-edge operation} on a set $A$ is a $(k+1)$-ary operation $e$ which, for all $x,y \in A$, satisfies:
\begin{align*}
e(x,x,y,y,y,...,y,y) &= y \\
e(x,y,x,y,y,...,y,y) &= y \\
e(y,y,y,x,y,...,y,y) &= y \\
e(y,y,y,y,x,...,y,y) &= y \\
&\hspace{0.2cm}\vdots \\
e(y,y,y,y,y,...,x,y) &= y \\
e(y,y,y,y,y,...,y,x) &= y.
\end{align*}
\end{definition}

It is clear that an operation is a 2-edge operation if and only if it is a Maltsev operation. It was also established in \cite{BIMMVW} that if an algebra contains a $k$-ary GMM (generalized majority-minority) operation, then it also contains a $k$-edge operation. As a result, we can see that a $k$-edge operation is indeed a generalization of both Maltsev and GMM operations. 

\begin{definition}
An algebra $\mathbb{B}$ is a \textit{subpower} of $\mathbb{A}$ if $\mathbb{B}$ is a subalgebra of $\mathbb{A}^n$ for some positive integer $n$. 
\end{definition}

If $\mathcal{I}$ is an instance of $\operatorname{CSP}(\mathbb{A})$ in the set of variables $V$, where $n = |V|$,  its solution set is a subpower of $\mathbb{A}$ (i.e. a subuniverse of $\mathbb{A}^n$.)  The Bulatov-Dalmau algorithm requires that the size of a generating set for the solution subuniverse of $\mathbb{A}^n$ be ``small", meaning that it is of order $\mathcal{O}(n^p)$ for some positive integer $p$. We mention certain equivalent conditions which ensure that all subpowers of $\mathbb{A}$ have small generating sets.

\begin{definition} Let $\mathbb{A}$ be an algebra and let $\mathcal{S} = \{\mathbb{B} \mid \mathbb{B} \leq \mathbb{A}^n\}$ be the set of all subpowers of $\mathbb{A}$ for some positive integer $n$. Let $s_\mathbb{A} (n) = \text{log}_2 |\mathcal{S}|$ and let $g_\mathbb{A} (n)$ be the smallest integer $t$ such that every subpower of $\mathbb{A}$ has a generating set of size at most $t$. For some positive integer $p$,
\begin{enumerate}
\item if $s_\mathbb{A} (n) = \mathcal{O}(n^p)$, then $\mathbb{A}$ has \textit{few subpowers}, and
\item if $g_\mathbb{A} (n) = \mathcal{O}(n^p)$, then $\mathbb{A}$ has \textit{polynomially generated subpowers}.
\end{enumerate}
\end{definition}

\begin{theorem}
Let $\mathcal{I}$ be an instance of $\operatorname{CSP}(\mathbb{A})$ and let $n = |V|$. The following statements are all equivalent for the algebra $\mathbb{A}$:
\begin{enumerate}
\item $\mathbb{A}$ contains a $k$-edge operation for some $k \geq 2$.
\item $\mathbb{A}$ has few subpowers and $s_\mathbb{A} (n) = \mathcal{O}(n^k)$.
\item $\mathbb{A}$ has polynomially generated subpowers and $g_\mathbb{A} (n) = \mathcal{O}(n^{k-1})$.
\end{enumerate}
\end{theorem}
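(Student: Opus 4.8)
The plan is to prove the cycle of implications $(1) \Rightarrow (3) \Rightarrow (2) \Rightarrow (1)$; the remaining implications between the three conditions then follow by composition. Here and below, by a \emph{$k$-edge term} of $\mathbb{A}$ I mean a term of the variety $\mathcal{V}(\mathbb{A})$ whose interpretation in $\mathbb{A}$ is a $k$-edge operation.

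The implication $(3) \Rightarrow (2)$ is a straightforward counting argument. If every subpower of $\mathbb{A}^n$ has a generating set of size at most $t := g_\mathbb{A}(n)$, then each subpower is $\langle X\rangle$ for some $X \subseteq A^n$ with $|X| \leq t$, so $|\mathcal{S}| \leq (|A|^n + 1)^t$ and hence $s_\mathbb{A}(n) = \log_2 |\mathcal{S}| \leq t\,(n \log_2 |A| + 1) = O(n \cdot n^{k-1}) = O(n^k)$.

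The implication $(1) \Rightarrow (3)$ is the technical core. I would introduce the notion of a \emph{compact representation} of a subpower $\mathbb{B} \leq \mathbb{A}^n$: a subset $G \subseteq B$ which, for every coordinate $i$, every choice of at most $k-2$ ``frozen'' coordinates preceding $i$, and every pattern of values on those frozen coordinates that is realised in $\mathbb{B}$, contains two tuples that agree on the frozen coordinates and branch at coordinate $i$ in exactly the same way as some pair of tuples of $\mathbb{B}$. The resulting list of ``local branching requirements'' is indexed by the $(k-1)$-element subsets of $\{1,\dots,n\}$ together with an $O(1)$-bounded amount of value data, so a minimal compact representation has size $O(n^{k-1})$. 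The heart of the matter is then the lemma that, \emph{if $e$ is a $k$-edge term of $\mathbb{A}$ and $G$ is a compact representation of $\mathbb{B} \leq \mathbb{A}^n$, then $\langle G\rangle = \mathbb{B}$}. I would prove it by induction on the coordinates: given a target tuple of $\mathbb{B}$ and a generated tuple agreeing with it on a prefix, apply $e$ to the generated tuple together with appropriate members of $G$ witnessing the branching at the next coordinate, using the extra $k-2$ slots of $e$ to keep the prefix frozen --- this is the exact analogue of the way a Maltsev term is used in the Bulatov--Dalmau algorithm. Combined with the size bound, this gives $g_\mathbb{A}(n) = O(n^{k-1})$.

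The implication $(2) \Rightarrow (1)$ is the hard converse, and I expect it to be the main obstacle. I would prove the contrapositive: if $\mathbb{A}$ has no $k$-edge term, then $s_\mathbb{A}(n)$ is not $O(n^k)$. Failure of a $k$-edge term means that a fixed finite set of identities fails in \emph{every} term of $\mathcal{V}(\mathbb{A})$; by a Birkhoff/compactness argument this produces, for arbitrarily large $n$, a ``critical'' relation $R \leq \mathbb{A}^n$ whose branching structure cannot be reproduced by any $O(n^{k-1})$-element subset. Forming direct products and amalgamated copies of sufficiently many such relations then yields, for each $n$, a family of pairwise distinct subpowers of $\mathbb{A}^n$ large enough that $s_\mathbb{A}(n)$ fails to be $O(n^k)$; alternatively one can route this step through the known equivalence of $k$-edge terms with parallelogram (cube) terms and quote the matching growth-rate estimates. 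The genuinely delicate aspect, here as well as in $(1) \Rightarrow (3)$, is the bookkeeping that makes the exponents $k$, $k$ and $k-1$ match up exactly, rather than merely yielding ``some polynomial bound''.
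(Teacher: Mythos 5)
First, a point of comparison: the paper does not actually prove this theorem. It is imported from \cite{BIMMVW}, and the only proof-related content in the text is the remark that the argument uses three term operations $p$, $d$, $s$ derived from the $k$-edge operation (the lemma stated immediately afterwards). So your proposal has to be judged as a reconstruction of the original argument rather than against an in-paper proof. Your implication $(3)\Rightarrow(2)$ is correct as written: with $t=g_\mathbb{A}(n)$ there are at most $(|A|^n+1)^t$ generated subpowers, giving $s_\mathbb{A}(n)=O(n\cdot t)$. Your $(1)\Rightarrow(3)$ is the right route in outline --- it is essentially the compact-representation (``signature'') argument of \cite{BIMMVW}, with indices consisting of $k-1$ coordinates plus bounded value data, whence the $O(n^{k-1})$ bound --- but be aware that the generation lemma is not carried out with the edge operation $e$ alone: the published proof works with the derived operations $p$, $d$ and $s$, because one needs Maltsev-like behaviour at the branching coordinate and near-unanimity-like behaviour to keep the frozen prefix fixed, and the induction that ``pushes'' agreement one coordinate further requires both. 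Your sketch gestures at this but the bookkeeping you acknowledge as delicate is precisely where the content lies.

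The genuine gap is $(2)\Rightarrow(1)$. As you present it, this step is only a gesture: the non-existence of a $k$-edge term is a statement about \emph{all} terms of $\mathcal{V}(\mathbb{A})$, and no Birkhoff/compactness argument by itself converts it into a single ``critical'' relation $R\leq\mathbb{A}^n$ that provably needs more than $O(n^{k-1})$ generators; producing such witnesses is the hard combinatorial core of \cite{BIMMVW}, where one constructs, for an algebra without edge terms, explicit families of subpowers built from fork/parallelogram configurations that no polymorphism can collapse, and then counts them to push $s_\mathbb{A}(n)$ beyond every polynomial bound. Moreover, the exponent-matching you flag (getting a $k$-edge term from $s_\mathbb{A}(n)=O(n^k)$ with the \emph{same} $k$) is exactly the part your sketch defers, and your fallback --- quoting the equivalence of edge terms with cube/parallelogram terms together with known growth estimates --- is circular here, since that equivalence is essentially the theorem you are trying to prove. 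As it stands, the cycle of implications is not closed, so the proposal does not yet constitute a proof; it is an accurate roadmap for $(3)\Rightarrow(2)$ and $(1)\Rightarrow(3)$ and a placeholder for $(2)\Rightarrow(1)$.
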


\noindent The proof for this theorem utilizes three term operations in $\mathbb{A}$ derived from the $k$-edge operation. The presence of these term operations allows us to derive the followingfact.

\begin{lemma}(\cite{BIMMVW})
Let $\mathbb{A}$ be a finite algebra with universe $A$ which contains a $k$-edge term operation $e$. Then the algebra $(A,e)$ contains three term operations $d(x,y)$, $p(x,y,z)$, and $s(x_1, x_2, ... , x_k)$ (which are also term operations of $\mathbb{A}$) such that:
\begin{align*}
p(x,y,y) &= x \\
p(x,x,y) &= d(x,y) \\
d(x, d(x,y)) &= d(x,y) \\
s(y,x,x,x,x,...,x,x) &= d(x,y) \\
s(x,y,x,x,x,...,x,x) &= x \\
s(x,x,y,x,x,...,x,x) &= x \\
&\hspace{0.2cm}\vdots \\
s(x,x,x,x,x,...,x,y) &= x. \\
\end{align*}
\par
\end{lemma}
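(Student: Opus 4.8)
The goal is to extract, from a single $k$-edge operation $e$, three term operations $d$, $p$, $s$ satisfying the listed identities. The plan is to write each of $d$, $p$, $s$ as an explicit substitution instance of $e$, and then verify each identity by plugging in and invoking the defining identities of $e$.

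First I would define the binary operation
\[
d(x,y) := e(y,x,x,x,\dots,x,x),
\]
i.e. substitute $y$ into the first argument of $e$ and $x$ everywhere else. The edge identities of $e$ give $e(y,y,y,\dots,y)=y$ by idempotence-type reasoning (actually directly from $e(x,x,y,y,\dots,y)=y$ with $x=y$), but more usefully they let us compute $d(x,d(x,y))$. Next I would define the ternary operation
\[
p(x,y,z) := e(x,y,x,z,z,\dots,z,z)\quad\text{(up to choosing which slots get $x$, $y$, $z$)},
\]
chosen so that the first two edge identities, read with the pattern $(x,x,y,\dots)$ and $(x,y,x,\dots)$, collapse the right arguments to force $p(x,y,y)=x$, while $p(x,x,y)$ reduces to an expression in $e$ that we can identify with $d(x,y)$. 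The precise slot assignment is dictated by matching the edge identities to the two identities $p(x,y,y)=x$ and $p(x,x,y)=d(x,y)$; I expect $p(x,y,z)=e(x,y,x,z,z,\dots,z)$ or a close variant to work. Finally, $s(x_1,\dots,x_k)$ will be essentially $e$ itself with one argument duplicated or a fixed argument plugged in to bring the arity from $k+1$ down to $k$; the identities demanded of $s$ are exactly the edge identities of $e$ with two of the $y$-slots merged, together with the first identity $s(y,x,\dots,x)=d(x,y)$ which should match the definition of $d$ by construction.

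The verification then proceeds identity by identity. For $d(x,d(x,y))=d(x,y)$ I would substitute the definition of $d$ into itself and repeatedly apply the edge identities to the inner and outer $e$; this is the one genuinely iterative computation, since it requires recognizing that applying $e$ with the $d$-pattern is idempotent, which follows because after one application the relevant arguments already agree. The remaining identities — $p(x,y,y)=x$, $p(x,x,y)=d(x,y)$, $s(x,y,x,\dots,x)=x$ through $s(x,\dots,x,y)=x$, and $s(y,x,\dots,x)=d(x,y)$ — are each a single substitution followed by one application of one edge identity, so they are routine once the slot assignments are fixed consistently.

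The main obstacle is bookkeeping: choosing the argument patterns for $p$ and $s$ so that all seven (plus) identities hold simultaneously with the same $d$. A wrong slot choice makes, say, $p(x,x,y)=d(x,y)$ hold but $p(x,y,y)=x$ fail, or forces an inconsistent definition of $d$ between the $p$-identity and the $s$-identity. I would resolve this by first pinning down $d$ from the $s$-identity $s(y,x,\dots,x)=d(x,y)$ (which essentially forces $s$ to be $e$ with the appropriate collapse and hence forces $d(x,y)=e(y,x,x,\dots,x)$), and only then reverse-engineer the $p$-pattern to be compatible. Since the statement is quoted from \cite{BIMMVW}, the existence of a consistent choice is guaranteed; the work is purely in exhibiting it and checking the finitely many identities.
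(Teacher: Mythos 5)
Your plan fails at its central step, and the failure is exactly where the finiteness hypothesis (which you never use) must enter. Defining $d(x,y):=e(y,x,x,\dots,x)$ does not give $d(x,d(x,y))=d(x,y)$: expanding, this is $e(e(y,x,\dots,x),x,\dots,x)$, and no edge identity applies, because the edge identities only constrain $e$ on the specific argument patterns listed; for a generic first argument $z$, the value $e(z,x,\dots,x)$ is unconstrained, so nothing makes the unary map $z\mapsto e(z,x,\dots,x)$ idempotent. Concretely, take $k=2$, $A=\mathbb{Z}_3$ and $e(a,b,c)=b+c-a$, which satisfies $e(x,x,y)=y$ and $e(x,y,x)=y$, i.e.\ is a $2$-edge operation; then your $d(x,y)=e(y,x,x)=2x-y$, and $d(x,d(x,y))=y\neq d(x,y)$ whenever $x\neq y$. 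The missing idea, which is how \cite{BIMMVW} actually proceeds, is to iterate to an idempotent power using finiteness: set $f_x(z)=e(z,x,\dots,x)$ and $d(x,y)=f_x^{m}(y)$ for an $m$ (e.g.\ $m=|A|!$) such that $g^{m}\circ g^{m}=g^{m}$ for every unary map $g$ on $A$; only then is $d(x,d(x,y))=d(x,y)$ automatic. (In the $\mathbb{Z}_3$ example this iteration collapses $d$ to $d(x,y)=y$, which does satisfy the identity.)

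The same problem propagates to $p$ and $s$, so ``reverse-engineering the slot assignments'' cannot rescue the single-substitution strategy. Your explicit candidate $p(x,y,z)=e(x,y,x,z,\dots,z)$ gives $p(x,y,y)=e(x,y,x,y,\dots,y)=y$ by the second edge identity, not $x$; and whatever pattern you choose, $p(x,x,y)$ and $s(y,x,\dots,x)$ must return the \emph{iterated} $d(x,y)$, which a single instance of $e$ cannot produce in general (same $\mathbb{Z}_3$ example). Likewise the collapses you propose for $s$, such as $e(x,x,y,x,\dots,x)$ with the odd argument in the third slot, are not governed by any edge identity at all. In \cite{BIMMVW} the terms $p$ and $s$ are built as compositions of $e$ with the iterated $d$ (with further iteration where needed), not as substitution instances of $e$ alone; note also that the paper you are working from offers no proof of this lemma --- it is quoted from \cite{BIMMVW} --- so the iteration/composition argument there is the one your sketch would have to reproduce.
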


In the same article, the authors prove the following (Theorems 7.1 and 8.1):

\begin{theorem} \label{NU} If $\mathbb{A}$ is a finite algebra which generates a variety omitting types 1 and 2, which has a $k$-edge term, for some $k\geq 3$, then the variety $V(\mathbb{A})$ is congruence distributive. In fact, $\mathbb{A}$ has a $k$-ary near unanimity term $s$.
\end{theorem}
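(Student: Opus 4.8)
I would prove the two assertions in turn, the displayed near-unanimity term being a strengthening of congruence distributivity. For congruence distributivity the idea is to combine two independent structural features of $V(\mathbb{A})$: it is congruence modular because $\mathbb{A}$ has an edge term, and it is congruence meet-semidistributive because it omits types $\mathbf 1$ and $\mathbf 2$; and a congruence lattice that is simultaneously modular and meet-semidistributive must be distributive. For the sharper statement I would take the $k$-ary term $s$ furnished by the previous lemma --- which already satisfies every near-unanimity identity except in its first coordinate --- and repair that coordinate using the J\'onsson terms that congruence distributivity then provides.

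For the first part: since $\mathbb{A}$ is finite, $V(\mathbb{A})$ is locally finite, and the previous lemma supplies the terms $d$, $p$, $s$. As shown in \cite{BIMMVW}, a variety generated by an algebra with an edge term is congruence modular, so $V(\mathbb{A})$ is congruence modular. On the other hand, by the Hobby--McKenzie classification in tame congruence theory, a locally finite variety omits types $\mathbf 1$ and $\mathbf 2$ exactly when it is congruence meet-semidistributive; hence every congruence lattice $\mathrm{Con}(\mathbb{B})$ with $\mathbb{B}\in V(\mathbb{A})$ is meet-semidistributive. Finally I would invoke the lattice-theoretic fact that a modular non-distributive lattice contains a copy of the diamond $M_3$, which fails meet-semidistributivity; so any lattice that is both modular and meet-semidistributive is distributive. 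Applying this to each $\mathrm{Con}(\mathbb{B})$ shows these lattices are all distributive, i.e.\ $V(\mathbb{A})$ is congruence distributive.

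For the near-unanimity term: by the lemma, $s$ is $k$-ary with $s(x,\dots,x,y,x,\dots,x)=x$ whenever the displaced variable occupies one of the coordinates $2,\dots,k$, while in the first coordinate $s(y,x,\dots,x)=d(x,y)$, where $d(x,y)=p(x,x,y)$ and $d(x,d(x,y))=d(x,y)$. Thus $s$ is a near-unanimity term except in its first coordinate, the only obstruction being that $d(x,y)$ need not equal $x$. Since $V(\mathbb{A})$ is now known to be congruence distributive, there are ternary J\'onsson terms $t_0,\dots,t_m$ with $t_0(x,y,z)=x$, $t_m(x,y,z)=z$, $t_i(x,y,x)=x$ for all $i$, $t_i(x,x,z)=t_{i+1}(x,x,z)$ for even $i$, and $t_i(x,z,z)=t_{i+1}(x,z,z)$ for odd $i$. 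I would use these to build a finite chain of $k$-ary terms, starting from $s$, each still equal to $x$ in coordinates $2,\dots,k$ and whose first-coordinate value $s(y,x,\dots,x)$ is advanced step by step along the J\'onsson chain from $d(x,y)$ to $x$. Because the ternary J\'onsson terms are only ever applied to the variables $x_1,\dots,x_k$ together with terms already built from them --- in particular the value $s(x_1,\dots,x_k)$ itself --- no new variables appear and the arity stays equal to $k$. The term obtained at the end of the chain is the desired $k$-ary near-unanimity term; this is essentially the argument of \cite[Theorem~8.1]{BIMMVW}.

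The main obstacle is this last construction. One has to design the composition of $s$ with the J\'onsson terms so that simultaneously (i) the near-unanimity identities in coordinates $2,\dots,k$ are preserved at every stage, (ii) the defect in the first coordinate genuinely moves one link along the J\'onsson chain at each step, and (iii) no auxiliary variable is introduced, so that the arity remains $k$. Arranging all three at once is the technical heart of the proof. The congruence-modularity ingredient used in the first part is also nontrivial, but it may be quoted directly from \cite{BIMMVW}.
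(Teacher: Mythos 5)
A preliminary remark: the paper offers no proof of this theorem at all --- it is imported from \cite{BIMMVW} (Theorems 7.1 and 8.1) --- so your attempt is really being measured against that source. The first half of your plan, congruence distributivity, is correct and complete in outline: the edge term gives congruence modularity of $V(\mathbb{A})$, omitting types 1 and 2 gives congruence meet-semidistributivity of this locally finite variety (Hobby--McKenzie), and a modular lattice that is not distributive contains a copy of $M_3$, which violates meet-semidistributivity, so every congruence lattice in $V(\mathbb{A})$ is distributive. All three ingredients are standard, correctly quoted, and correctly assembled, and this is in substance the derivation behind the cited result.

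The genuine gap is in the second assertion, the $k$-ary near-unanimity term, which is the real content of the theorem. What you give there is a plan whose decisive step is missing: you must exhibit $k$-ary terms, starting from $s$, which preserve the identities in coordinates $2,\dots,k$, keep the arity at exactly $k$, and move the first-coordinate value from $d(x,y)$ to $x$; you explicitly defer this (``the technical heart''), asserting it is ``essentially the argument of \cite{BIMMVW}, Theorem 8.1''. That assertion carries the whole weight of the proof and is not substantiated: the argument in \cite{BIMMVW} is not a routine composition of $s$ with J\'onsson terms and uses the finiteness of $\mathbb{A}$ in an essential way. Two concrete warning signs that the proposed repair is not bookkeeping: first, a construction using only the displayed identities of $d$, $p$, $s$ together with the J\'onsson identities would be a purely equational proof of ``$k$-edge term plus congruence distributivity implies a $k$-ary NU term'' for arbitrary, possibly infinite, algebras --- a much stronger statement that certainly cannot be waved through; second, the J\'onsson identities only link $t_i$ and $t_{i+1}$ on tuples of the form $(x,x,z)$ and $(x,z,z)$, and when the displaced variable sits in a coordinate $j\geq 2$ there is no designated variable left to occupy the repeated slots of $t_i$, so your conditions (i) and (ii) pull against each other; arranging them simultaneously while holding the arity at $k$ is precisely what the cited theorem accomplishes. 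As it stands, the second half of your proposal reduces to citing \cite{BIMMVW}, which is exactly what the paper does; either do that openly or supply the missing construction.
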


In subsequent sections, we assume that all algebras under consideration have a $k$-edge term, for some fixed $k\geq 2$.

\section{Reduction to binary relations}\label{Binary}

In this section, we outline the reduction of an arbitrary instance with a sufficient degree of consistency to a binary one. The construction is due to L. Barto and M. Kozik and we largely adhere to their exposition in \cite{b-k1}.

An instance is said to be \emph{syntactically simple} if it satisfies the following conditions:

\begin{itemize}
\item every constraint is binary and it its scope is a pair of distinct variables $(x,y)$.
\item for every pair of distinct variables $x,y$, there is at most one constraint $R_{x,y}$ with the scope $(x,y)$.
\item if $(x,y)$ is the scope of $R_{x,y}$, then $(y,x)$ is the scope of the constraint $R_{y,x}=\{(b,a) \, \vert\, (a,b)\in R_{x,y}\}$ (\emph{symmetry of constraints}).
\end{itemize}

Given the congruence modular algebra $\mathbb{A}$ parametrizing the instance $\mathcal{I}$, such that the maximal arity of a relation in $\mathcal{I}$ is $p$, and which has a $k$-ary edge term, let $K=\max\{p,k-1\}$. We run the algorithm verifying the $(2\lceil\frac{K}{2}\rceil,3\lceil\frac{K}{2}\rceil)$-consistency on $\mathcal{I}$. If the algorithm terminates in failure, we output ``$\mathcal{I}$ has no solution." If the algorithm terminates successfully, we output a new, syntactically simple instance $\mathcal{I}'$ in the following way:

\begin{itemize} 
\item The instance is parametrized by $\mathbb{A}^{\lceil \frac{p}{2}\rceil}$, which is an algebra with a $k$-edge term, for some $k\geq 3$. Since $\mathbb{A}$ generates a  variety, which has a $k$-edge term and, then, so does the variety generated by $\mathbb{A}^{\lceil\frac{p}{2}\rceil}$.
\item For every $\lceil\frac{p}{2}\rceil$-tuple of variables in $\mathcal{I}$, we introduce a new variable in $\mathcal{I}'$ and, if $x=(x_1,\ldots,x_{\lceil\frac{p}{2}\rceil})$ and $y=(y_1,\ldots,y_{\lceil\frac{p}{2}\rceil})$ with $x\neq y$, we introduce a constraint 
\begin{multline*}
R_{x,y}=\{((a_1,\ldots,a_{\lceil\frac{p}{2}\rceil}),(b_1,\ldots,b_{\lceil\frac{p}{2}\rceil}))\,\vert \\ (a_1,\ldots,a_{\lceil\frac{p}{2}\rceil},b_1,\ldots,b_{\lceil\frac{p}{2}\rceil}) \mbox{ admit a consistent $p$-assignment of values }\}.
\end{multline*}
\end{itemize}

The binary instance $I'$ constructed in this way will have a solution if, and only if, the instance $I$ has a solution. and will be (2,3)-consistent.

\section{A consistency algorithm for CSPs with few subpowers templates}

In this section, we will construct a polynomial time algorithm for solving constraint satisfaction problems over a finite  template with few subpowers.

The underpinning of the algorithm is the notion of affine consistency which pre-processes the instance by removing affine CSP subinstances which do not contain any solutions. The consistency check will produce a polynomial family of subinstances in which solutions must lie. We can view such subinstances as ``passive"; the reductions performed on the current active instance are implicitly performed on the passive subinstances in such a way that affine consistency is preserved. A reader familiar with the Few Subpowers Algorithm will notice a smiliarity between the passive subinstances and the notion of compact representation (i.e. signature) in that algorithm. This similarity is not coincidental; the tuples in the compact representation of a relation will belong to passive subinstances.

\subsection{Affine consistency}

Let $A\leq \mathbb{S}_x$. For any $y\in V$, $y\neq x$, we define $R^+_{x,y}(A)=\{b\in \mathbb{S}_{y} \, : \, \exists a\in A, \, (a,b)\in R_{x,y}\}$.  Clearly, $R^+_{x,y}(A)$ is a subuniverse of $\mathbb{S}_{y}$.

We will define affine consistency in such a way, that all subinstances which are CSPs over isomorphic simple subuniverses and which do not have a solution, are excluded by the consistency check. In addition, such subinstances which cannot be extended to a larger (2,3)-consistent subinstance are excluded.

We start by defining the list $\mathcal{A}$ of all pairs $(A,\theta_A)$, where $A$ is a subuniverse of some $\mathbb{S}_{x_i}$ and $\theta_A$ is a maximal congruence of $A$.We can arrange the list $\mathcal{A}$ in such a way that, if  $(A,\theta_A)$ and $(A',\theta_{A'})$ are two elements of the list and $A'$ is contained in a $\theta_A$-block of $A$, then $(A,\theta_A)$ appears in the list $\mathcal{A}$ before $(A',\theta_{A'})$.

We are now ready to state the procedure which enforces affine consistency

\begin{enumerate}
\item For the next pair $(A,\theta_A)$ in the list $\mathcal{A}$, form the $(A,\theta_A)$-\emph{test instance} in the following way: suppose $A\leq \mathbb{S}_{x}$, for some $x\in V$. For $y\neq x$, if there exists a congruence $\alpha_{x_j}$ on $R_{x,y}(A)$, such that, if $B_1$ and $B_2$ are two distinct $\theta_A$-blocks and $p$ a path pattern from $x_i$ to $x_j$ such that $R^+_{x,y}(A)\cap (B_1+p)$ and $R^+_{x,y}(A)\cap (B_2+p)$ are containt in distinct blocks of $\alpha_{y}$, we will say that the variable $y$ is  \emph{relevant}. Therefore, for each relevant variable $y$,
$$\mathbb{S}_y/\alpha_{y}\cong A/\theta_A.$$
In fact, $\alpha_{y}$ is independent of the choice of the path pattern $p$, because of (2,3)-consistency. 

We define a \emph{strand} to be the set of those congruence blocks in each relevant domain which are linked to the same congruence block of $\theta_A$. The $(A,\theta_A)$-test instance will have as its domains the algebras $R^+_{x,y}(A)/\alpha_{y}$, for $j\neq i$, for all relevant variables $y$ and $A$ in the $x$-th coordinate, along with $\alpha_{x}=\theta_A$.  Since $\mathcal{I}$ is a (2,3)-consistent instance, for any pair of relevant variables $y,z$, distinct from $x$, the binary constraint $E_{y,z}$ induces a subdirect product on $R_{x,y}^+(A)$ and $R_{x,z}^+(A)$, so that the $(A,\theta_A)$-test instance is 1-consistent.

\item The $(A,\theta_A)$-test instance is a  syntactically simple binary CSP over a simple affine module. Based on an earlier discussion, a simple affine module has as its subreduct the structure of a finite dimensional vector space over a finite field. As such, it can be solved using Gaussian elimination since it is, essentially, a system of linear equations over the said finite field. Given a $\theta_A$-block $B$, we test whether $B$ appears in any solution of the system. If it does not, we remove all vertices in $B$ from the instance.

\item For each $(A,\theta_A)$-block $B$ which passed the test in the previous step, form a subinstance whose domains are $B$ and $R^+_{x,y}(B)$, for $y\neq x$ and enforce (2,3)-consistency on it. If the (2,3)-consistency test fails, remove the block $B$ from the instance.

\item Enforce (2,3)-consistency.

\item Proceed to the next element in the list $(A',\theta_A')$, if there are any left.

\end{enumerate}

There are only polynomially many pairs in the list $\mathcal{M}$, so the algorithm for enforcing affine consistency runs in polynomial time. In fact the number of test instances can be bounded above by $\mathcal{O}(n)$.

\begin{lemma} Let $\mathcal{I}$ be a syntactically simple binary instance and let $\mathcal{I'}$ be the instance produced by applying the affine consistency check to it. Then, the sets of solutions to $\mathcal{I}$ and $\mathcal{I'}$ coincide.
\end{lemma}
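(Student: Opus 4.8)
The plan is to show that the affine consistency check only ever removes vertices (elements of domains $\mathbb{S}_y$) that cannot participate in any solution of $\mathcal{I}$, and that it never alters any constraint relation in a way that would lose a solution; the reverse inclusion (a solution of $\mathcal{I}'$ is a solution of $\mathcal{I}$) is immediate because $\mathcal{I}'$ is obtained from $\mathcal{I}$ only by shrinking domains and the constraint relations accordingly. So the real content is: if $f:V\to A$ is a solution of $\mathcal{I}$, then $f$ survives every step of the procedure.

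First I would fix a solution $f$ of $\mathcal{I}$ and an arbitrary pair $(A,\theta_A)$ from the list $\mathcal{A}$, say with $A\leq\mathbb{S}_x$, and argue that the restriction of $f$ to the relevant variables, pushed down through the congruences $\alpha_y$, is a solution of the $(A,\theta_A)$-test instance. The point is that the test instance is a positive-primitive-defined reduct of a quotient of a subinstance of $\mathcal{I}$: its domains are the algebras $R^+_{x,y}(A)/\alpha_y$ together with $A/\theta_A$, and its constraints are the images of the $E_{y,z}$ under the corresponding quotient maps. Since $f(x)\in\mathbb{S}_x$ and $f$ satisfies all constraints, either $f(x)\notin A$ — in which case the test instance does not constrain $f$ at all on those variables, and nothing $f$ uses gets removed (here one uses that removal in step (2) only deletes $\theta_A$-blocks, i.e. elements of $A$ and of the $R^+_{x,y}(A)$, and the $\alpha_y$-block of $f(y)$ is the strand corresponding to $f(x)/\theta_A$ only when $f(x)\in A$) — or $f(x)\in A$, and then $(f(x)/\theta_A,\,(f(y)/\alpha_y)_{y\text{ relevant}})$ is a genuine solution of the test instance. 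Consequently the $\theta_A$-block $B=f(x)/\theta_A$ passes the test in step (2) and is not removed, and similarly the $\alpha_y$-block of $f(y)$ in each relevant domain is not removed.

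Next I would handle steps (3) and (4). For the surviving block $B=f(x)/\theta_A$, the subinstance on domains $B$ and $R^+_{x,y}(B)$ has $f$ (restricted appropriately) as a solution, hence it is nonempty and every solution is locally consistent, so enforcing $(2,3)$-consistency on it cannot delete $B$; thus $B$ survives step (3). Finally, the global $(2,3)$-consistency enforcement in step (4) — and likewise the $(2,3)$-consistency implicitly maintained throughout — never removes a tuple or vertex that lies in a solution, by the standard soundness of local consistency algorithms: any solution of $\mathcal{I}$ is in particular a $(2,3)$-consistent partial-assignment system, so its values are never pruned. Iterating over all pairs in $\mathcal{A}$ (finitely many, and the termination/polynomiality is already established) gives that $f$ is a solution of $\mathcal{I}'$.

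The main obstacle I anticipate is the bookkeeping around the quotient structure in step (2): one must check carefully that the $\alpha_y$ are well-defined (independent of the path pattern $p$, which the text attributes to $(2,3)$-consistency), that $\mathbb{S}_y/\alpha_y\cong A/\theta_A$ compatibly across all relevant $y$ so that ``strands'' make sense, and — most delicately — that pushing the solution $f$ through these quotients really lands in the solution set of the affine test instance, i.e. that the constraints of the test instance are exactly the quotiented $E_{y,z}$ and contain $(f(y)/\alpha_y,\,f(z)/\alpha_z)$. Once that compatibility is nailed down, the proof is a routine induction along the list $\mathcal{A}$, with each of steps (1)--(4) being individually sound in the sense above; I would present it in exactly that order, flagging the well-definedness of $\alpha_y$ and the ``$f(x)\notin A$'' case as the two spots requiring care.
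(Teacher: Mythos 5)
Your proposal is correct and follows essentially the same route as the paper's proof: both fix a solution $f$ of $\mathcal{I}$, observe that when $f(x)\in A$ its restriction to the relevant variables (pushed through the congruences $\alpha_y$) solves the $(A,\theta_A)$-test instance so the block $f(x)/\theta_A$ survives step (2), and then invoke the soundness of $(2,3)$-consistency enforcement for steps (3)--(4), with the reverse inclusion immediate because $\mathcal{I}'$ only shrinks domains. Your write-up is simply more explicit than the paper's about the quotient bookkeeping and the case $f(x)\notin A$, which the paper leaves implicit.
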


\begin{proof} If there exists a solution $f$ to $\mathcal{I}$ whose projection to the $x$-coordinate is in $A\leq \mathbb{S}_x$, then, its restriction to relevant variables is also a solution of the $(A,\alpha_A)$-test instance, viewed as a subinstance of $\mathcal{I}$. If affine consistency test fails on an $\alpha_A$-block, then there cannot be any solutions $f$ projecting into that block in their $x$-coordinate.

Also, the solution projecting into a $\alpha_A$-block $B$ in its x-coordinate will lie in its entirety in the subinstance induced by $B$, so this subinstance must be (2,3)-consistent.
\end{proof}

Assuming the instance passes the affine consistency test, we will also obtain a polynomial family of subinstances $\mathcal{P}$, henceforth referred to as \emph{passive subinstances}, which are those subinstances examined in Step 3 of the algorithm, and which are 1-consistent, after the affine consistency check has terminated successfully.

\section{Reduction to smaller subinstances - outline}

In what follows, we assume that $\mathcal{I}$ is a 1-consistent SLAC instance which is also affine consistent, with the accompanying list $\mathcal{P}$ of passive subinstances, such that, for every subuniverse $A\leq \mathbb{S}_{x}$, $x\in V$, and every maximal congruence $\theta$ of $A$, such that $A/\theta_A$ is a simple affine module, there is a passive 1-consistent SLAC subinstance generated by every $\theta$-block of $A$.

\section{Instance $\mathcal{I}$ is not absorption-free}

In this case,  some $\mathbb{S}_{x_i}$ contains a proper absorbing subuniverse $B$. The reduction via absorption from Kozik's paper adapts to this setting and can be applied to the instance $\mathcal{I}$. This particular choice of $B$ implicitly defines reductions on all passive subinstances from $\mathcal{P}$. If the reduced subinstance $\mathcal{I}'$ fails to intersect a passive subinstance from $\mathcal{P}$, that passive subinstance is removed from $\mathcal{P}$. For the instance $\mathcal{I}$, if $\mathbb{S}_x$ is not absorption-free, the analysis of the proof in Section 10 of \cite{Kozik2016} indicates that $B\unlhd \mathbb{S}_x$ can always be chosen in such a way that $B$ is a minimal absorbing subuniverse of $\mathbb{S}_x$ and which is, therefore, absorption-free.

It is easily seen that, for any such choice of $B$, $\mathcal{P}$ cannot become empty: namely, by considering any maximal congruence $\psi$ on $B$, we see that the passive subinstances determined by the blocks of $B/\psi$ must remain in $\mathcal{P}$.

The following fact has an obvious proof, based on the definition of an absorbing subuniverse:

\begin{lemma} Let $B\leq A$ and $C\unlhd A$. Then, if $C\cap B\neq\emptyset$, $C\cap B\unlhd B$.
\end{lemma}

\begin{proposition} \label{useful} Let $\mathcal{I}$ be a 1-consistent syntactically simple binary instance with domains $\mathbb{S}_x$, $x\in V$. If $A\unlhd \mathbb{S}_x$, then $R^+_{x,y}(A)\unlhd \mathbb{S}_y$, for all $y\neq x$.
\end{proposition}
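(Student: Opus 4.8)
The plan is to show that $R^+_{x,y}(A)$ is absorbed in $\mathbb{S}_y$ by \emph{the same} term that witnesses $A\unlhd\mathbb{S}_x$; this is exactly the situation anticipated by the remark following the definition of absorption. Write $t(z_1,\dots,z_n)$ for a term with $t(A,\dots,A,\mathbb{S}_x,A,\dots,A)\subseteq A$ irrespective of the position of the $\mathbb{S}_x$-argument, and set $B:=R^+_{x,y}(A)$. Note first that $B$ is a nonempty subuniverse of $\mathbb{S}_y$: it is closed under all operations because $R_{x,y}$ is, and it is nonempty because 1-consistency makes $R_{x,y}$ subdirect in $\mathbb{S}_x\times\mathbb{S}_y$, so any $a\in A$ has a partner $b$ with $(a,b)\in R_{x,y}$.

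Next I would fix an index $i\in\{1,\dots,n\}$ and elements $b_1,\dots,b_n$ with $b_j\in B$ for $j\neq i$ and $b_i\in\mathbb{S}_y$ arbitrary, aiming to prove $t(b_1,\dots,b_n)\in B$. For each $j\neq i$, choose $a_j\in A$ with $(a_j,b_j)\in R_{x,y}$, using the definition of $B$; for $j=i$, choose $a_i\in\mathbb{S}_x$ with $(a_i,b_i)\in R_{x,y}$, which exists since $\mathcal{I}$ is 1-consistent and hence $R_{x,y}\leq_{sp}\mathbb{S}_x\times\mathbb{S}_y$. Now apply $t$ coordinatewise to the tuples $(a_1,b_1),\dots,(a_n,b_n)\in R_{x,y}$: since $R_{x,y}$ is a constraint relation, hence a subalgebra of $\mathbb{S}_x\times\mathbb{S}_y$, the pair $(t(a_1,\dots,a_n),\,t(b_1,\dots,b_n))$ again lies in $R_{x,y}$. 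For the first component, the arguments $a_j$ with $j\neq i$ all lie in $A$ while $a_i\in\mathbb{S}_x$, so the absorption property of $A$ with the $\mathbb{S}_x$-slot in position $i$ yields $t(a_1,\dots,a_n)\in A$. Thus $t(b_1,\dots,b_n)$ has a partner in $A$ inside $R_{x,y}$, i.e.\ $t(b_1,\dots,b_n)\in R^+_{x,y}(A)=B$. As $i$ and the $b_j$ were arbitrary, $t$ witnesses $B\unlhd\mathbb{S}_y$.

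The argument is essentially bookkeeping, and I do not expect a genuine obstacle. The one point that needs care is that the definition of absorption allows the exceptional ``large'' argument to sit in an arbitrary coordinate; this is precisely what lets the same index $i$ be used simultaneously on the $\mathbb{S}_x$-side (to invoke $A\unlhd\mathbb{S}_x$) and on the $\mathbb{S}_y$-side (to establish $B\unlhd\mathbb{S}_y$). The only other ingredient is 1-consistency, which is used exactly once, to supply a preimage $a_i\in\mathbb{S}_x$ of the free element $b_i\in\mathbb{S}_y$.
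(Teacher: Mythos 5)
Your argument is correct and is exactly the standard absorption-propagation argument the paper relies on: the paper states Proposition \ref{useful} without proof (treating it, like the preceding lemma, as immediate from the definition of absorption together with the subdirectness of $R_{x,y}$ guaranteed by 1-consistency), and your bookkeeping with the same witnessing term $t$ applied coordinatewise inside $R_{x,y}\leq_{sp}\mathbb{S}_x\times\mathbb{S}_y$ is precisely that intended argument. No gaps.
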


Using this fact, we see that, if $M\leq \mathbb{S}_x$ is absorption-free, then every absorbing subuniverse of $C\unlhd \mathbb{S}_x$, such that $C\cap M\neq\emptyset$, must satisfy $M\leq C$. In addition, for every $y$, $R^+_{x,y}(M)\leq R^+_{x,y}(C)$. The proof from \cite{Kozik2016} shows that SLAC remains preserved in all passive subinstances under the absorption reduction defined in that paper, unless a domain of the passive subinstance fails to intersect the minimal absorbing subuniverse in $\mathbb{S}_{x}$ in some coordinate $x$.

\section{Absorption-free Instances}

In this section, we assume that the instance $\mathcal{I}$ is such, that all domains $\mathbb{S}_x$, $x\in V$ are absorption-free.

\subsection{There exists a simple affine module in the instance}

Let $\theta_x$ be a maximal congruence of $\mathbb{S}_x$, such that $\mathbb{S}_x/\theta_x$ contains a simple affine module $A$ as a subuniverse. Since affine consistency has been enforced on $\mathcal{I}$, for any $\theta_x$-block $B$ contained in $A$, we can select a passive subinstance from $\mathcal{P}$, which is $B$-based and replace $\mathcal{I}$ with it, to obtain a proper subinstance $\mathcal{I}'$, which is SLAC.

In addition, we update all the passive subinstances in $\mathcal{P}$ by taking the intersections with the $B$-based subinstance.

One thing that remains to be verified is that, after the reduction of $\mathcal{I}$ to the $B$-based subinstance $\mathcal{I}'$, if $y$ was a non-relevant variable in some subinstance from $\mathcal{P}$, it remains such after the reduction. 

Let $A'$ be a subuniverse of some $\mathbb{S}_x$ and $\theta$ the congruence such that $y$ is a non-relevant variable in the $(A',\theta)$-test instance. Then, $A'/\theta$ and $R^+_{x,y}(A')$ are linked and the subproduct of $A'/\theta$ with every minimal absorbing subuniverse of $R^+_{x,y}(A')$ will also be linked and, therefore, a full product.

\subsection{There is no simple affine module in the quotient instance}

Next, we refer to the analysis of simple, idempotent, congruence skew-free algebras with an edge term from Section \ref{skewfree}. 

In this case, $HS(\mathbb{S}_x)$ contains no simple affine modules, for all $x\in V$. Then, by \cite{lvz}, the variety $\mathcal{V}(\mathbb{S}_x)$ omits types 1 and 2 in the sense of tame congruence theory. Hence, $\mathbb{S}_x$ is of bounded width and, in fact, the pseudovariety $\operatorname{HSP}_{fin}(\{\mathbb{S}_x \, :\, x\in V\})$ also omits types 1 and 2. by Theorem \ref{NU}, the pseudovariety has an $l$-ary NU polymorphism, for some $l\geq 3$.

For that reason, since the instance $\mathcal{I}$ is 1-consistent and SLAC, by \cite{Kozik2016}, the instance contains a solution. In fact, since the instance $\mathcal{I}$ contains an NU polymorphism, a weaker condition than SLAC, linear arc consistency, is sufficient to guarantee the existence of a solution.

\section{Conclusion}

We have presented an algorithm for solving constraint satisfaction problems over finite templates with few subpowers, which is based on a new type of consistency check,the affine consistency, which solves localized subproblems in $\operatorname{HS}(\mathbb{A})$, where $\mathbb{A}$ is the parametrizing algebra of the CSP.

The key ingredient of the algorithm was the notion of a $(B,\theta_B)$-test instance, where $B \in S(\mathbb{A})$ and $\theta_B\in \operatorname{Con}(B)$. In order to be able to define the test instance, (2,3)-consistency was used in order to establish the independence of the congruences on the domains of the test instance from the choice of the path pattern used. On the other hand, a weaker notion of local consistency was sufficient to solve the problem in the subinstances omitting affine behaviour; namely, the CSPs with congruence distributive templates can be defined in Linear  Datalog. 

\begin{openproblem} If $\mathcal{V}(\mathbb{A})$ is congruence modular, are there weaker local consistency notions which would suffice to define the congruences in the test instances for affine consistency, independently of the path patterns chosen?
\end{openproblem}

If this is indeed the case, we may be able to place the constraint satisfaction problems with templates with few subpowers in a complexity class which omits \textsc{P}-complete problems, e.g $\textsc{NC}^2$.

One of the questions which have motivated this line of investigation was the question which extensions of first-order logic, which are thought of as potential candidates for capturing polynomial time on finite relational structures, have the capability of expressing the solvability of CSPs with Maltsev templates. One such candidate logic is the Choiceless Polynomial Time with Counting (for more details, see \cite{gradel2015polynomial} ) In that article, the following problem was posed:

\begin{openproblem} Are all CSPs with templates with few subpowers expressible in the Choiceless Polynomial Time with Counting?
\end{openproblem}

\bibliography{digraph_reduction} 

\begin{thebibliography}{10}

\bibitem{b-k2}
{\sc L.~Barto and M.~Kozik}, {\em {Constraint satisfaction problems of bounded
  width}}, in {2009 50th {A}nnual {IEEE} {S}ymposium on {F}oundations of
  {C}omputer {S}cience ({FOCS} 2009)}, IEEE Computer Soc., Los Alamitos, CA,
  2009, pp.~595--603.

\bibitem{b-k1}
\leavevmode\vrule height 2pt depth -1.6pt width 23pt, {\em Constraint
  satisfaction problems solvable by local consistency methods}, Journal of the
  ACM (JACM), 61 (2014), p.~3.

\bibitem{bergman}
{\sc C.~Bergman}, {\em Universal algebra: Fundamentals and selected topics},
  CRC Press, 2011.

\bibitem{BIMMVW}
{\sc J.~Berman, P.~Idziak, P.~Markovi{\'c}, R.~McKenzie, M.~Valeriote, and
  R.~Willard}, {\em {Varieties with few subalgebras of powers}}, Trans. Amer.
  Math. Soc., 362 (2010), pp.~1445--1473.

\bibitem{bulatov2006simple}
{\sc A.~Bulatov and V.~Dalmau}, {\em A simple algorithm for mal'tsev
  constraints}, SIAM Journal on Computing, 36 (2006), pp.~16--27.

\bibitem{b-j-k}
{\sc A.~Bulatov, P.~Jeavons, and A.~Krokhin}, {\em {Classifying the complexity
  of constraints using finite algebras}}, SIAM J. Comput., 34 (2005),
  pp.~720--742.

\bibitem{Burris1981}
{\sc S.~Burris and H.~P. Sankappanavar}, {\em A course in universal algebra,
  volume 78 of Graduate Texts in Mathematics}, Springer-Verlag, New York, 1981.

\bibitem{gradel2015polynomial}
{\sc E.~Gr{\"a}del and M.~Grohe}, {\em Is polynomial time choiceless?}, in
  Fields of Logic and Computation II, Springer, 2015, pp.~193--209.

\bibitem{Kozik2016}
{\sc M.~Kozik}, {\em Weaker consistency notions for all the csps of bounded
  width}.

\bibitem{lvz}
{\sc B.~Larose, M.~Valeriote, and L.~Z{\'a}dori}, {\em Omitting types, bounded
  width and the ability to count}, International Journal of Algebra and
  Computation, 19 (2009), pp.~647--668.

\end{thebibliography}
\bibliographystyle{siam}

\end{document}